\documentclass[12pt]{amsart}

\usepackage{comment}
\usepackage{amsthm,amssymb,amsmath,amstext,amsfonts}
\usepackage{enumitem,mathtools,pgfplots,pgfmath}
\usepackage{graphicx}
\usepackage{subcaption}
\usepackage[export]{adjustbox}
\usepackage{tikz}
\usepackage{amsmath}
\usepackage{pseudo}
\usetikzlibrary{arrows,shapes,automata,backgrounds,decorations,petri,positioning}
\usetikzlibrary{decorations.pathreplacing,angles,quotes}
\tikzset{every loop/.style={min distance=8.8mm,looseness=4}}

\pgfdeclarelayer{background}
\pgfsetlayers{background,main}
\pgfdeclarelayer{background}
\pgfsetlayers{background,main}

\usepackage[margin=1in,letterpaper,portrait]{geometry}
\usepackage[latin1]{inputenc}
\usepackage{url}
\usepackage{amsaddr}

\theoremstyle{plain}
\theoremstyle{definition}
\newtheorem{theorem}{Theorem}

\newtheorem{lemma}{Lemma}
\newtheorem{definition}{Definition}

\newtheorem{corollary}{Corollary}

\DeclareMathAlphabet{\mathpzc}{OT1}{pzc}{m}{it}

\pgfplotsset{compat=1.16}
\begin{document}

\title{Digraphs with exactly one Eulerian tour}

\author{Luz Grisales$^1$, 
Antoine Labelle$^2$, Rodrigo Posada$^1$, Stoyan Dimitrov$^3$}
\address{$^1$ Massachusetts Institute of Technology}
\address{$^2$ Coll\`ege de Maisonneuve}
\address{$^3$ University of Illinois at Chicago}
\email{luzg@mit.edu, antoinelab01@gmail.com, rposada@mit.edu, sdimit6@uic.edu}

\begin{abstract}
We give two combinatorial proofs of the fact that the number of loopless directed graphs (digraphs) on the vertex set $[n]$ with no isolated vertices and with exactly one Eulerian tour up to a cyclic shift is $\frac{1}{2}(n-1)!C_{n}$, where $C_{n}$ denotes the $n$-th Catalan number. We construct a bijection with a set of labeled rooted plane trees and with a set of valid parenthesis arrangements. 
\end{abstract}

\maketitle

\section{Introduction and main facts}
Richard Stanley has a list containing nearly 250 problems and facts, for which he asks of combinatorial proofs \cite{problems}. This work describes two such proofs for one of the problems in the list, namely Problem 199, without a known combinatorial proof. First, we recall some definitions following \cite{algComb}.

A (finite) directed graph (or digraph) $D$ consists of a vertex set $V = \{v_{1},\ldots ,v_{n}\}$ and an edge set $E = \{e_{1},\ldots ,e_{q}\}$, together with a function $\phi : V\to V$ determining the direction of each edge. If $\phi(e) = (u,v)$, then we think of $e$ as an arrow from $u$ to $v$. We will call $u$ - initial vertex and $v$ - final vertex. The outdegree of a vertex $v$, denoted $outdeg(v)$, is the number of edges of $D$ with initial vertex $v$. Similarly, the indegree of $v$, denoted $indeg(v)$, is the number of edges of $D$ with final vertex $v$. A loop is an edge $e$ for which $\phi(e) = (v,v)$ for some vertex $v$. A digraph is balanced if $indeg(v) = outdeg(v)$ for each of its vertices $v$. An oriented path in a digraph $D$ is a sequence of vertices $v_{1},\ldots , v_{m}$, where $(v_{i},v_{i+1})$ is an edge of $D$ for each $i\in [m-1]$. If the vertices $v_{1},\ldots , v_{m}$ are all different, then we call the path \emph{simple}. If we have a simple path and $v_{m} = v_{1}$, then we have an \emph{oriented simple cycle}.

\begin{definition}
An \textit{Eulerian tour} in a directed graph $D$ is a sequence of vertices $a_1a_2\cdots a_k$ such that $(a_1, a_2), \ (a_2, a_3), \cdots, (a_{k-1}, a_k), \ (a_k, a_1)$ are all the distinct directed edges of $D$. 
\end{definition}

Any cyclic shift $a_ia_{i+1}\cdots a_ka_1\cdots a_{i-1}$ of an Eulerian tour is also an Eulerian tour and we will say that these tours are \textit{equivalent up to a cyclic shift}. 

\begin{definition}
An \textit{Eulerian digraph} is a digraph which has no isolated vertices and contains exactly one Eulerian tour (and its equivalents under cyclic shift).
\end{definition}

Our goal is to prove the following claim.

\begin{theorem}
\label{th:main}
If $A_{n}$ is the set of loopless \textit{Eulerian digraphs}  on the vertex set $[n]$, then \\ $|A_{n}| = \frac{1}{2}(n-1)!C_{n}$ (sequence A102693 in OEIS \cite{OEIS}), where $C_{n} = \frac{1}{n+1}\binom{2n}{n}$ denotes the $n$-th Catalan number.
\end{theorem}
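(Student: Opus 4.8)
The plan is to identify each digraph in $A_n$ with its unique Eulerian tour and then to characterize and count these tours combinatorially. Since a loopless digraph is recovered from its edge set, and an Eulerian tour lists each edge exactly once as a cyclically consecutive pair of vertices, the map sending $D \in A_n$ to its tour $a_1a_2\cdots a_q$ (read up to cyclic shift) is a bijection onto the set of cyclic words on the alphabet $[n]$ in which every letter of $[n]$ occurs, no two cyclically consecutive letters are equal (looplessness), all consecutive pairs are distinct (distinct edges), and for which the displayed tour is the \emph{only} Eulerian tour of the induced digraph. Each vertex $v$ is visited $outdeg(v) = indeg(v)$ times, and a degree count (or the BEST theorem, used here only as a guide) shows that uniqueness forces every $outdeg(v) \in \{1,2\}$; so each letter appears once or twice, and $q = n + k$ where $k$ is the number of twice-visited vertices.

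First I would prove the key structural lemma: placing the $q$ tour positions on a circle and joining, by a chord, the two positions of each twice-visited vertex, the tour is the unique Eulerian tour \emph{if and only if} the resulting chord diagram is non-crossing. The mechanism is a local swap at a twice-visited vertex $u$: writing the tour as $u\,A\,u\,B$, re-pairing the two in-edges with the two out-edges at $u$ splits it into the two cycles carried by the arcs $A$ and $B$, so a single swap always disconnects. For the ``only if'' direction I would exhibit, whenever two chords cross, a combination of such swaps that re-merges the pieces into a second genuine Eulerian tour; for the ``if'' direction I would argue that for a non-crossing (hence laminar) family of chords every nonempty set of swaps strictly decreases the number of closed walks, so no alternative tour exists.

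With this characterization in hand, the count becomes the enumeration of non-crossing ``double-occurrence'' cyclic words on $[n]$, which I would organize through a bijection with labeled rooted plane trees: the nesting of the non-crossing chords is exactly a rooted plane-tree shape, contributing the Catalan factor $C_{n}$, while the assignment of the labels $[n]$ along the cyclic order contributes $(n-1)!$, and the reversal (reflection) symmetry of a cyclic tour accounts for the factor $\tfrac12$. The parenthesis description follows by reading the circle and recording an opening symbol at the first occurrence and a closing symbol at the second occurrence of each twice-visited vertex, the non-crossing condition being precisely balancedness.

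I expect the main obstacle to be the structural lemma of the second paragraph --- in particular the ``only if'' direction, where one must produce an explicit second Eulerian tour from a crossing pair of chords and verify it is admissible --- together with the bookkeeping needed to make the tree bijection exact: one must check that the non-crossing condition already rules out repeated edges and loops (chords joining cyclically adjacent positions), confirm connectivity (automatic, since an Eulerian tour is a single closed walk), and pin down the reflection symmetry responsible for the $\tfrac12$, which behaves degenerately for very small $n$ even though the product $\tfrac12(n-1)!C_n$ remains an integer.
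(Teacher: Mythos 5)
Your structural lemma is essentially correct, and it is in fact the heart of the paper's second proof: the chord diagram of the tour is non-crossing exactly when the associated parenthesis arrangement is valid, your swap at a twice-visited vertex is the paper's interlacing argument (a crossing $i\cdots j\cdots i\cdots j$ yields the second tour $i \xrightarrow{a} j \xrightarrow{d} i \xrightarrow{c} j \xrightarrow{b} i$), and your laminar induction for the converse is a workable substitute for the paper's ``type $A$ / type $B$'' argument --- note the slip that a nonempty set of swaps \emph{increases} (disconnects), not decreases, the number of closed walks, and that you must also record the standard fact that every Eulerian tour arises from the given one by swapping the in/out pairing at some set of twice-visited vertices. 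The genuine gap is in the final count. The factor $\frac{1}{2}$ in $\frac{1}{2}(n-1)!C_n$ does \emph{not} come from reversal (reflection) symmetry: reversal acts on $A_n$ with fixed points for every $n\geq 2$, e.g.\ the digraph with $2$-cycles $\{1,2\}$ and $\{1,3\}$ (cyclic word $1213$) is sent to itself, as is the ``path of $2$-cycles'' $1,2,\ldots,n$ for any $n$; so quotienting by reversal cannot produce a clean factor of $2$ (for $n=3$ the quotient has $4$ elements, not $5/2$). The claim that labels contribute a uniform $(n-1)!$ also fails: your cyclic words have varying length $n+k$, and for $n=3$ the triangle shape admits $2$ labelings while the one-chord shape admits $3$; the correct total $5=2+3$ does not factor shape-by-shape. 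Relatedly, $C_n$ counts \emph{rooted} (linearized) non-crossing diagrams, not the cyclic shapes you are working with, so the three factors you name cannot be multiplied in the cyclic setting.

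The consistent bookkeeping --- which is exactly what the paper does --- is to linearize rather than to divide by a reflection. First add a loop at every once-visited vertex, so that every letter occurs exactly twice; this is a bijection from $A_n$ onto the set $B_n$ of loop-allowed Eulerian digraphs with all in- and outdegrees equal to $2$ (your ``chords joining cyclically adjacent positions'' are precisely these loops, which is why they must be re-admitted rather than excluded). Then mark one of the $2n$ edges, i.e.\ choose a starting point of the cyclic word. A marked object is exactly a valid arrangement of $n$ labeled pairs of parentheses, read off by opening at the first visit and closing at the second, and your non-crossing condition is exactly validity; there are $n!C_n$ such arrangements. Since each digraph in $B_n$ is counted once per marked edge, this gives $2n\,|A_n| = n!\,C_n$, hence $|A_n| = \frac{n!\,C_n}{2n} = \frac{1}{2}(n-1)!\,C_n$. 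In other words, the $\frac{1}{2}$ is part of the factor $\frac{1}{2n}$ coming from the cyclic-rotation quotient, not from any reflection; repairing your proof means replacing the third paragraph of your proposal by this marking argument (or by the paper's alternative first proof, a bijection with labeled plane trees in which the factor $\frac{1}{2}$ comes from an explicit fixed-point-free involution on trees).
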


For example, $|A_{3}| = 5$. Indeed, there are two such digraphs that look like triangles and three that consist of two 2-cycles with a common vertex (see Figure \ref{fig:A3}). The related OEIS sequence, A102693, was created by Richard Stanley. As a reference, he points out to an unpublished work of him. Thus, we can assume that Theorem \ref{th:main} was first proved there.

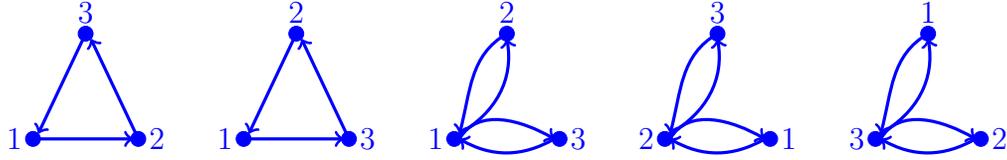
\begin{figure}
\begin{tikzpicture}[scale = 1.4]

  \filldraw[blue] (-5,0) circle (2pt) node[anchor=east] {1};
  \draw[blue, very thick, ->]  (-5,0) -- (-4.05,0);
  
  \filldraw[blue] (-4,0) circle (2pt) node[anchor=west] {2};
  \draw[blue, very thick, ->]  (-4,0) -- (-4.45,0.95);
  
  \filldraw[blue] (-4.5,1) circle (2pt) node[anchor=south] {3};
  \draw[blue, very thick, ->]  (-4.5,1) -- (-4.95,0.05);

  \filldraw[blue] (-3,0) circle (2pt) node[anchor=east] {1};
  \draw[blue, very thick, ->]  (-3,0) -- (-2.05,0);
  
  \filldraw[blue] (-2,0) circle (2pt) node[anchor=west] {3};
  \draw[blue, very thick, ->]  (-2,0) -- (-2.45,0.95);
  
  \filldraw[blue] (-2.5,1) circle (2pt) node[anchor=south] {2};
  \draw[blue, very thick, ->]  (-2.5,1) -- (-2.95,0.05);

  \filldraw[blue] (-1,0) circle (2pt) node[anchor=east] {1};
  \draw[blue, very thick, ->]  (-0.95,0) to [out=60,in=150] (-0.05,0);
  \draw[blue, very thick, ->]  (-0.05,0) to [out=210,in=-30] (-0.95,0);
  
  \filldraw[blue] (0,0) circle (2pt) node[anchor=west] {3};
  
  \filldraw[blue] (-0.5,1) circle (2pt) node[anchor=south] {2};
  \draw[blue, very thick, ->]  (-0.5,1) to [out=210,in=75] (-0.95,0.05);
  \draw[blue, very thick, ->]  (-0.95,0.05) to [out=30,in=-80] (-0.5,0.95);

  \filldraw[blue] (1,0) circle (2pt) node[anchor=east] {2};
  \draw[blue, very thick, ->]  (1.05,0) to [out=60,in=150] (1.95,0);
  \draw[blue, very thick, ->]  (1.95,0) to [out=210,in=-30] (1.05,0);
  
  \filldraw[blue] (2,0) circle (2pt) node[anchor=west] {1};
  
  \filldraw[blue] (1.5,1) circle (2pt) node[anchor=south] {3};
  \draw[blue, very thick, ->]  (1.5,1) to [out=210,in=75] (1.05,0.05);
  \draw[blue, very thick, ->]  (1.05,0.05) to [out=30,in=-80] (1.5,0.95);

  \filldraw[blue] (3,0) circle (2pt) node[anchor=east] {3};
  \draw[blue, very thick, ->]  (3.05,0) to [out=60,in=150] (3.95,0);
  \draw[blue, very thick, ->]  (3.95,0) to [out=210,in=-30] (3.05,0);
  
  \filldraw[blue] (4,0) circle (2pt) node[anchor=west] {2};
  
  \filldraw[blue] (3.5,1) circle (2pt) node[anchor=south] {1};
  \draw[blue, very thick, ->]  (3.5,1) to [out=210,in=75] (3.05,0.05);
  \draw[blue, very thick, ->]  (3.05,0.05) to [out=30,in=-80] (3.5,0.95);
\end{tikzpicture}   
\caption{The five digraphs with three vertices and a unique Eulerian tour.}
\label{fig:A3}
\end{figure}

It is not difficult to show that every Eulerian graph must be connected and balanced \cite[Theorem 10.1]{algComb}. The BEST theorem that we recall below gives us a formula for the total number of Eulerian tours in a digraph. In order to understand this result, one should be familiar with the term \emph{oriented tree} (see Figure \ref{fig:exOrT}). An oriented tree with root $v$ is a finite digraph $T$ with $v$ as one of its vertices, such that there is a unique directed path from any other vertex of $T$ to $v$. This means that the underlying undirected graph (after we erase all the arrows of the edges of $T$) is a tree.

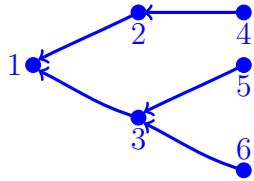
\begin{figure}[h!]
\begin{tikzpicture}[scale = 1.4]
  \filldraw[blue] (0,0) circle (2pt) node[anchor=east] {1};
  \filldraw[blue] (1,0.5) circle (2pt) node[anchor=north] {2};
  \draw[blue, very thick, ->]  (1,0.5) to [out=206.5, in = 26.5] (0.05,0.05);
  \filldraw[blue] (2,0.5) circle (2pt) node[anchor=north] {4};
  \draw[blue, very thick, ->]  (2,0.5) to [out=180, in = 0] (1.05,0.5);
  
  \filldraw[blue] (1,-0.5) circle (2pt) node[anchor=north] {3};
  \draw[blue, very thick, ->]  (1,-0.5) to [out=163.5, in = -26.5] (0.05,-0.05);
  
  \filldraw[blue] (2,0) circle (2pt) node[anchor=north] {5};
  \draw[blue, very thick, ->]  (2,0) to [out=206.5, in = 26.5] (1.05,-0.45);
  
  \filldraw[blue] (2,-1) circle (2pt) node[anchor=south] {6};
  \draw[blue, very thick, ->]  (2,-1) to [out=163.5, in = -26.5] (1.05,-0.55);
  
\end{tikzpicture}   
\caption{Example of an oriented tree.}
\label{fig:exOrT}
\end{figure}

\begin{theorem}[BEST theorem, \cite{algComb}]
\label{th:BEST}
Let $D$ be a connected balanced digraph with vertex set $V$. Fix an edge $e$ in $V$ and let $v$ be the initial vertex of that edge. Let $\tau (D, v)$ denote the number of oriented (spanning) subtrees of $D$ with root $v$, and let $\epsilon (D, e)$ denote the number of Eulerian tours of $D$ starting with the edge $e$. Then
$$\epsilon (D, e) = \tau (D, v) \prod_{u \in V} (outdeg(u) -1)!$$
\end{theorem}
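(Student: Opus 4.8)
The plan is to prove the BEST theorem by exhibiting an explicit bijection between the Eulerian tours of $D$ that begin with the fixed edge $e$ and the pairs $(T,\sigma)$, where $T$ is an oriented spanning subtree of $D$ rooted at $v$ (an in-tree, with every non-root vertex having out-degree $1$ in $T$) and $\sigma$ records, for each vertex $u$, a linear ordering of the edges leaving $u$, normalized so that the unique $T$-edge out of each non-root vertex is placed \emph{last} and the fixed edge $e$ is placed \emph{first} among the edges leaving $v$. Once the bijection is in place, counting the admissible orderings vertex by vertex gives the product at once, so essentially all the content sits in the bijection.

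The forward map sends an Eulerian tour to its \emph{last exits}. Traversing the tour from $e$, I record for each vertex $u\neq v$ the last edge along which the tour leaves $u$, and call the resulting edge set $T$. Each such $u$ contributes exactly one edge, so $T$ has $|V|-1$ edges and each non-root vertex has out-degree $1$ in $T$; it then suffices to show $T$ is acyclic. I argue by contradiction: a directed cycle among last-exit edges must avoid $v$ (the root records no last-exit edge, so $v$ has no out-edge in $T$), and if $(a,b)$ is the cycle edge occurring latest in the tour, then the tour enters $b$ via $(a,b)$ at a moment after $b$'s recorded last exit; since $b\neq v$ and $D$ is balanced the tour must leave $b$ again afterwards, contradicting that no later exit from $b$ exists. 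Hence $T$ is an in-tree rooted at $v$, and reading off at each vertex the order in which its out-edges are consumed produces $\sigma$, automatically with the $T$-edge last at each non-root vertex and $e$ first at $v$.

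The reverse map reconstructs a tour greedily from $(T,\sigma)$: start at $v$, traverse $e$, and upon reaching any vertex depart along the next unused out-edge in its prescribed order, halting when no unused out-edge is available. The main obstacle is showing this walk exhausts every edge. First, a parity argument from balance shows the walk can always leave a non-root vertex it enters (each arrival at $u\neq v$ leaves used in-edges exceeding used out-edges by one, and $indeg(u)=outdeg(u)$ forces a free out-edge), so it halts only at $v$ and is therefore a closed walk, with used in-edges equal to used out-edges at every vertex. Now suppose some edge is unused. Climbing $T$-edges from any vertex with an unused out-edge up to the root, I find an edge $(u,w)\in T$ with $u$ unfinished and $w$ finished (all of $w$'s out-edges used); because the $T$-edge out of $u$ is ordered last, $(u,w)$ is unused, yet $w$ finished together with $indeg(w)=outdeg(w)$ and the closed-walk balance forces \emph{all} in-edges of $w$ to be used — a contradiction. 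Thus the walk is Eulerian, and checking that the two constructions invert each other (the greedy tour's last exits recover $T$ and its consumption orders recover $\sigma$) completes the bijection.

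Finally I count the pairs $(T,\sigma)$. There are $\tau(D,v)$ choices for the rooted in-tree $T$. For each non-root vertex $u$ the $outdeg(u)$ out-edges may be ordered freely except that the $T$-edge is forced last, giving $(outdeg(u)-1)!$ orderings; for the root $v$ the edge $e$ is forced first, again giving $(outdeg(v)-1)!$ orderings. Multiplying over all vertices yields
\[
\epsilon(D,e)=\tau(D,v)\prod_{u\in V}(outdeg(u)-1)!,
\]
as claimed.
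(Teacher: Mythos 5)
Your proof is correct. Note, however, that the paper does not prove this statement at all: the BEST theorem is quoted with a citation to Stanley's \emph{Algebraic Combinatorics} \cite{algComb} and is only \emph{used} (via Corollary~\ref{corr:BEST}), so there is no internal proof to compare against. Your argument is precisely the classical bijective proof found in the cited reference (due to van Aardenne-Ehrenfest and de Bruijn): the last-exit edges of a tour form an oriented spanning in-tree rooted at $v$ (your latest-cycle-edge argument for acyclicity is sound), the greedy reconstruction from a pair $(T,\sigma)$ halts only at $v$ by balance, and the climb-to-the-root argument correctly forces every edge to be used, giving the bijection and hence the product formula.
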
 

\begin{corollary}[from Theorem \ref{th:BEST}]
\label{corr:BEST}
A digraph $D \in A_n$ if and only if
\begin{enumerate}
    \item For every vertex $v$, $D$ has exactly one oriented (spanning) subtree with root $v$.
    \item The outdegree of an arbitrary vertex of $D$ is 1 or 2.\\
\end{enumerate}
\end{corollary}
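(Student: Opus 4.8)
The plan is to read the corollary directly off the BEST theorem (Theorem~\ref{th:BEST}) after one elementary observation linking ``one Eulerian tour up to cyclic shift'' to the quantity $\epsilon(D,e)$ in that theorem. First I would note that, since every edge of $D$ occurs exactly once in any Eulerian tour, each cyclic-equivalence class of tours contains exactly one representative beginning with a prescribed edge $e$. Hence the number of Eulerian tours of $D$ up to cyclic shift equals $\epsilon(D,e)$, and this value is in particular the same for every edge $e$. Consequently $D$ has a \emph{unique} Eulerian tour up to cyclic shift if and only if $\epsilon(D,e)=1$ for one (equivalently every) edge $e$. This reduces the whole statement to analyzing when the product in Theorem~\ref{th:BEST} equals $1$.

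For the forward direction, suppose $D\in A_n$. Then $D$ admits an Eulerian tour, so it is connected and balanced, and since it has no isolated vertices every vertex satisfies $outdeg(u)\ge 1$. Fixing any edge $e$ with initial vertex $v$, Theorem~\ref{th:BEST} gives $1=\epsilon(D,e)=\tau(D,v)\prod_{u}(outdeg(u)-1)!$, a product of positive integers equal to $1$; hence every factor equals $1$. Thus $\tau(D,v)=1$, and $(outdeg(u)-1)!=1$ for all $u$. The latter holds exactly when $outdeg(u)\in\{1,2\}$ (since $0!=1!=1$ while $k!\ge 2$ for $k\ge 2$), which is condition (2). For condition (1) I would use that every vertex $v$ is the initial vertex of some edge because $outdeg(v)\ge 1$; applying the same computation to such an edge forces $\tau(D,v)=1$ for that particular $v$, and ranging over all $v$ shows that $D$ has exactly one oriented spanning subtree rooted at each vertex.

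For the converse I would begin from conditions (1) and (2) for a loopless $D$ on $[n]$. Condition (1) provides, for each vertex $v$, a spanning subtree along which every other vertex reaches $v$; letting $v$ vary over all vertices shows $D$ is strongly connected, hence connected and free of isolated vertices. The one genuinely delicate point is that BEST requires $D$ to be \emph{balanced}, which is not immediate from (1)--(2). I would supply this through the matrix-tree description of $\tau(D,v)$: the vector $\bigl(\tau(D,v)\bigr)_{v}$ is a left null vector of the Laplacian $\mathrm{diag}(outdeg)-A$, so that $\tau(D,v)\equiv 1$ forces the all-ones vector to be a left null vector, i.e.\ every column sum to vanish, i.e.\ $indeg(v)=outdeg(v)$ for all $v$. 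Once balancedness is established, Theorem~\ref{th:BEST} applies and, exactly as before, conditions (1) and (2) make each factor of $\epsilon(D,e)$ equal to $1$; thus $\epsilon(D,e)=1$, so $D$ has a unique Eulerian tour, and together with looplessness and the absence of isolated vertices this places $D$ in $A_n$.

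The step I expect to be the main obstacle is precisely this verification of balancedness in the converse: the forward direction is a clean application of BEST, whereas the reverse needs the fact that a digraph for which every root admits exactly one spanning in-tree is already Eulerian. I would handle it either via the left-null-vector statement above, or---if one prefers to stay entirely within the language of the excerpt---by taking the ambient universe to be connected balanced loopless digraphs, in which case the equivalence follows solely from Theorem~\ref{th:BEST} and the cyclic-shift observation, with no appeal to the matrix-tree theorem.
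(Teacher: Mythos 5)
Your proof is correct, and it is actually more complete than what the paper itself offers: the paper states this corollary with no proof at all, treating it as an immediate consequence of Theorem \ref{th:BEST}. Your forward direction (unique tour up to cyclic shift is equivalent to $\epsilon(D,e)=1$ for one, hence every, edge $e$; then every factor of the BEST product is a positive integer and must equal $1$; then using $outdeg(v)\ge 1$ to realize each vertex as the initial vertex of some edge so that $\tau(D,v)=1$ holds for \emph{every} root) is exactly the argument the paper leaves implicit. Where you genuinely go beyond the paper is the converse: you correctly flag that the BEST theorem applies only to connected \emph{balanced} digraphs, and balancedness is not among hypotheses (1)--(2), so it must be derived before BEST can be invoked. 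Your derivation is sound: since condition (1) makes each $\tau(D,v)$ nonzero, the Laplacian $L=\mathrm{diag}(outdeg)-A$ has corank exactly one, its adjugate has constant columns whose diagonal entries are the $\tau(D,v)$, hence $\mathrm{adj}(L)$ is the all-ones matrix, and $\mathrm{adj}(L)\,L=0$ gives $\mathbf{1}^{T}L=0$, i.e.\ $indeg(v)=outdeg(v)$ for all $v$. I would urge you to write out that adjugate computation (or cite Tutte's directed matrix--tree theorem together with the Markov chain tree theorem) rather than assert the left-null-vector fact, since it is the one non-elementary step. Note also that this converse direction is not a pedantic add-on: the paper actually uses it in the second half of the proof of Lemma \ref{lemma:1}, where conditions (1) and (2) are verified and membership in $A_n$ is concluded, so your argument repairs a genuine logical gap. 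Your fallback suggestion of shrinking the ambient universe to connected balanced digraphs would weaken the corollary to less than what Lemma \ref{lemma:1} needs, so the matrix--tree route is the one to keep.
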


Using Corollary \ref{corr:BEST}, we will characterize the digraphs in $A_{n}$ by two other conditions that will be used later.
\begin{lemma}
\label{lemma:1}
A digraph $D \in A_n$ if and only if
\begin{enumerate}
    \item[i)] There exists a unique oriented simple path between any two vertices of $D$.
    \item[ii)] Every vertex of $D$ is part of exactly one or two simple oriented cycles.
    
\end{enumerate}
\end{lemma}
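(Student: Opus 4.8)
The plan is to lean on Corollary~\ref{corr:BEST}, which already characterizes $A_n$ by the two conditions (1) [a unique oriented spanning subtree rooted at every vertex] and (2) [every outdegree is $1$ or $2$]. It therefore suffices to prove that, for a loopless digraph $D$ (loops are excluded throughout the paper), the pair (1),(2) is equivalent to the pair (i),(ii). I would organize the argument around two structural correspondences: one matching oriented simple paths with oriented spanning subtrees, and one counting the simple cycles through a fixed vertex.

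First I would establish the equivalence (i) $\Leftrightarrow$ (1). For the direction (i) $\Rightarrow$ (1), fix a root $v$ and, for every other vertex $u$, let $e_u$ be the first edge of the unique simple path from $u$ to $v$ guaranteed by (i). A short induction shows that following these chosen edges from any $u$ retraces that unique path and hence reaches $v$ without repetition, so $\{e_u\}$ is an oriented spanning subtree rooted at $v$; moreover any spanning subtree rooted at $v$ must assign each $u$ the first edge of its (unique) path to $v$, so the subtree is forced, giving (1). For the converse (1) $\Rightarrow$ (i), note first that the existence of a spanning subtree rooted at each vertex makes $D$ strongly connected, so simple paths exist between all pairs; for uniqueness I would argue by contradiction. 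Given two distinct simple paths $P \neq Q$ from $u$ to $v$, let $T$ be the unique subtree rooted at $v$, which contains one simple $u$-to-$v$ path; I would build a second subtree $T_Q$ by using the edges of $Q$ along $Q$ and the edges of $T$ elsewhere, then check that $T_Q$ is acyclic and that it differs from $T$ at the vertex where $P$ and $Q$ first diverge, contradicting uniqueness.

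Next I would prove the counting identity that drives (ii). Assuming (i), every simple oriented cycle through a vertex $v$ is obtained by prepending an outgoing edge $v \to w$ to the unique simple path from $w$ back to $v$, and distinct outgoing edges yield distinct cycles; hence the number of simple oriented cycles through $v$ equals $outdeg(v)$, and the symmetric argument using the incoming edges shows it also equals $indeg(v)$. This simultaneously yields that $D$ is balanced and that, once (i) is in force, condition (ii) [one or two cycles through each vertex] is equivalent to condition (2) [$outdeg(v)\in\{1,2\}$].

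With these pieces, the lemma follows by assembling the two directions. If $D\in A_n$, then (1),(2) hold by Corollary~\ref{corr:BEST}, so (i) holds by the correspondence between simple paths and spanning subtrees and then (ii) holds by the cycle count. Conversely, if (i),(ii) hold, then (i) gives (1) and strong connectivity, while (i) together with the cycle count turns (ii) into (2) and shows that $D$ is balanced; since $D$ is loopless with no isolated vertices, Corollary~\ref{corr:BEST} places $D$ in $A_n$. I expect the main obstacle to be the converse (1) $\Rightarrow$ (i): one must show that two genuinely different simple paths from $u$ to $v$ force two genuinely different spanning subtrees, which requires care in splicing the edges of $Q$ with those of $T$ and in verifying that the spliced object is acyclic and really differs from $T$.
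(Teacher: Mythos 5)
Your proposal is correct, and it rests on the same foundation as the paper's own proof (both directions are funneled through Corollary~\ref{corr:BEST}), but the two key technical steps are executed differently, and the differences are instructive. For the implication ``unique spanning subtrees $\Rightarrow$ unique paths,'' the paper performs a \emph{single-edge} swap: it takes the first edge $f'$ of the second path $\mathcal{P}'$ that leaves the tree path $\mathcal{P}$, deletes the tree edge $f$ with the same initial vertex, and asserts that the result is again a spanning subtree. That assertion needs an extra check the paper omits: the swap creates a cycle precisely when the tree path from the head of $f'$ back to the root passes through the swap vertex, so one should really swap at the \emph{last} edge of $\mathcal{P}'$ not in the tree (or argue further). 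Your alternative --- rerouting along the whole of $Q$ and keeping the edges of $T$ elsewhere --- sidesteps this entirely: every non-root vertex still has out-degree one, and the forward walk from any vertex follows tree edges until it first meets $Q$ and then follows $Q$ to the root, so acyclicity is immediate. The one detail you must still add is the choice of which path to splice: if $Q$ happens to lie entirely inside $T$, then $T_Q=T$ and no contradiction results; since $P\neq Q$ cannot both be tree paths, splice along whichever of the two contains a non-tree edge (at the first divergence vertex at most one of the two outgoing edges can be the tree edge). This is exactly the ``care'' you flagged, and it is a one-line fix rather than a gap. Second, your cycle count --- simple cycles through $v$ biject with the out-edges of $v$ (close each out-edge with the unique return path), and likewise with the in-edges --- is cleaner than the paper's asymmetric version (at most two cycles because the outdegree is at most two, at least one by closing an in-edge), and it buys you balancedness $indeg(v)=outdeg(v)$ for free, a fact the paper never states because it is already encoded in the BEST-theorem corollary. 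The remaining ingredient, ``unique paths $\Rightarrow$ unique spanning subtree,'' is essentially identical in both proofs: your set of first edges of the unique paths coincides with the paper's union of those paths.
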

\begin{proof}
$[$First part: $D \in A_{n}$ $\Longrightarrow$ conditions $i)$ and $ii)]$ Let $D \in A_{n}$. Let $u$ and $v$ be two arbitrary vertices in $D$. By condition $(1)$ of Corollary \ref{corr:BEST}, there exists exactly one oriented spanning subtree $T_{u}$ of $D$ with root $u$. We know that $v$ has to be a vertex of $T_{u}$ and that there exists a unique simple path $\mathcal{P}$ from $v$ to $u$ in $T_{u}$, which is a simple oriented path in $D$. Assume that there exists another path $\mathcal{P'}\neq \mathcal{P}$ between $v$ and $u$ in $D$. Begin from $v$ and follow $\mathcal{P}'$. Let $f'$ be the first edge in $\mathcal{P}'$ which is not part of $\mathcal{P}$ and let $f$ be the edge in $\mathcal{P}$ with the same initial vertex as $f'$. If you delete $f$ from $T_{u}$ and add $f'$ to it, you will obtain a graph $T'_{u}$. One can easily see that $T'_{u}$ is an oriented spanning subtree of $D$, different from $T_{u}$ (see Figure \ref{fig:lemma}). This is a contradiction. Thus, we showed that condition $i)$ holds. 

Corollary \ref{corr:BEST} implies that each vertex of $D$ can be part of at most two simple oriented cycles since its outdegree is $1$ or $2$. It remains to show that each vertex of $D$ is part of at least one such cycle. Let $v$ be an arbitrary vertex of $D$ and let $(u,v)$ be an edge of $D$ (such an edge exists since the indegree of $v$ is $1$ or $2$). We showed that there is a unique oriented simple path between $v$ and $u$. This path together with the edge $(u,v)$ forms a simple oriented cycle. Thus condition $ii)$ holds. 

\begin{figure}[h!]
\begin{tikzpicture}[scale = 0.85]
  \filldraw[blue] (0,4) circle (2pt) node[anchor=south] {1};
  \filldraw[blue] (-2,2) circle (2pt) node[anchor=south] {2};
  \filldraw[blue] (-3,1) circle (2pt) node[anchor=south] {3};
  \filldraw[blue] (-1,1) circle (2pt) node[anchor=south] {4};
  
  \filldraw[blue] (2,2) circle (2pt) node[anchor=south] {5};
  \filldraw[blue] (1,1) circle (2pt) node[anchor=north] {6};
  \filldraw[blue] (2,1) circle (2pt) node[anchor=east] {7};
  \filldraw[blue] (3,1) circle (2pt) node[anchor=east] {8};
  
  \filldraw[blue] (0,0) circle (2pt) node[anchor=south] {9};
  \filldraw[blue] (-1,-1) circle (2pt) node[anchor=north] {11};
  \filldraw[blue] (0,-1) circle (2pt) node[anchor=north] {12};
  \filldraw[blue] (1,-1) circle (2pt) node[anchor=north] {13};
  
  \filldraw[blue] (2,0) circle (2pt) node[anchor=west] {10};
  
  \draw[blue, very thick]  (0,4) to [out=225, in = 45] (-2,2);
  \draw[blue, very thick, ->]  (2,2) to [out=135, in = -45] (0.05,3.95);
  \draw[blue, very thick]  (-2,2) to [out=45, in = 225] (-3,1);
  \draw[blue, very thick]  (-2,2) to [out=-45, in = 135] (-1,1);
  
  \draw[blue, very thick, ->]  (1,1) to [out= 45, in = 225] (1.95,1.95);
  \draw[blue, very thick]  (2,2) to [out=-90, in = 90] (2,1);
  \draw[blue, very thick]  (2,2) to [out=-45, in = 135] (3,1);
  
  \draw[blue, very thick, ->]  (0,0) to [out=45, in = 225] (0.95,0.95);
  \draw[blue, very thick]  (1,1) to [out=-45, in = 135] (2,0);
  
  \draw[blue, very thick, ->]  (-1,-1) to [out=45, in = 225] (-0.05,-0.05);
  \draw[blue, very thick]  (0,0) to [out=-90, in = 90] (0,-1);
  \draw[blue, very thick]  (0,0) to [out=-45, in = 135] (1,-1);
  
  \draw[blue, dashed, very thick, ->]  (1,1) to [out=161.5, in = -18.5] (-1.88,1.96);
  
  \filldraw[blue] (0.5,1.7) circle (0pt) node[anchor=east] {$f'$};
  \filldraw[blue] (1.35,1.25) circle (0pt) node[anchor=south] {$f$};
  
\end{tikzpicture}   
\caption{The tree $T_{u}$ in the first part of the proof of Lemma \ref{lemma:1}; $u = 1$, $v=11$, $\mathcal{P} = 11,9,6,5,1$, $\mathcal{P'} = 11,9,6,2,1$, $f' = (6,2)$ and $f = (6,5)$. Delete $f$ and add $f'$ to obtain another tree $T'_{u}$.}
\label{fig:lemma}
\end{figure}
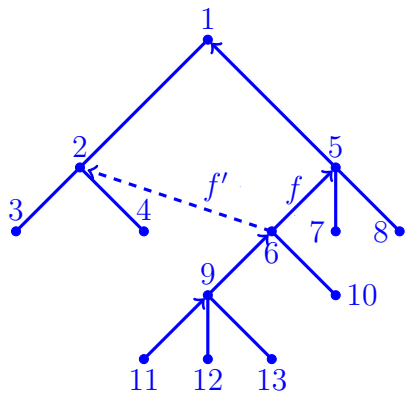

$[$second part: conditions $i)$ and $ii)$ $\Longrightarrow$ $D \in A_{n}]$ Take a digraph $D$ for which conditions $i)$ and $ii)$ hold. We have to show that conditions $(1)$ and $(2)$ from Corollary \ref{corr:BEST} also hold. Let $v$ be an arbitrary vertex of $D$. Condition $ii)$ implies that $outdeg(v)\geq 1$. We will show that $outdeg(v)< 3$. Condition $i)$ implies that no edge of $D$ can be part of two different simple cycles. Indeed, assume that $(u,w)$ is an edge of $D$, which is a part of two different simple oriented cycles. Then, we must have at least two different simple oriented paths between $w$ and $u$, which contradicts condition $i)$. Now, assume that $outdeg(v)\geq 3$ and let $(v,u_{1})$, $(v,u_{2})$ and $(v,u_{3})$ are three different edges of $D$. We know that there exist simple paths between $u_{i}$ and $v$, for $i=1,2,3$. Thus, $v$ participates in simple cycles through $u_{i}$, for $i=1,2,3$ and no two of these simple cycles share and edge. Therefore these three cycles are different. This is a contradiction with condition $ii)$.

It remains to show that condition $(1)$ from Corollary \ref{corr:BEST} holds. Take an arbitrary vertex $v$ of $D$. We have a unique oriented simple path from each of the other vertices of $D$ to $v$. Take the union of these paths. The graph that you will obtain is an oriented spanning subtree $T_{v}$ of $D$ with root $v$. Assume that there exists another such subtree $T'_{v}$. Then, since $T_{v}\neq T'_{v}$, we must have a vertex $w$ in $D$, for which the unique oriented path from $w$ to $v$ in $T'_{v}$ is different from the unique oriented path from $w$ to $v$ in $T_{v}$. These are two different oriented paths from $w$ to $v$ in $D$, which is a contradiction.
\end{proof}
\begin{corollary}
\label{corr:lem1}
If $D\in A_{n}$, then no pair of cycles in $D$ have an edge in common.
\end{corollary}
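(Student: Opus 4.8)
The plan is to derive this immediately from condition $i)$ of Lemma \ref{lemma:1}, arguing by contradiction. Suppose that two distinct simple oriented cycles $C_{1}$ and $C_{2}$ of $D$ share a common edge $(u,w)$. First I would observe that deleting the edge $(u,w)$ from a simple oriented cycle leaves a simple oriented path from $w$ back to $u$: since a simple cycle visits each of its vertices exactly once, removing a single edge cannot introduce a repeated vertex. Call these paths $\mathcal{P}_{1}$ (coming from $C_{1}$) and $\mathcal{P}_{2}$ (coming from $C_{2}$).

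Next I would argue that $\mathcal{P}_{1}\neq \mathcal{P}_{2}$. Indeed, each cycle $C_{i}$ is recovered from $\mathcal{P}_{i}$ by re-attaching the edge $(u,w)$, so if $\mathcal{P}_{1}=\mathcal{P}_{2}$ then $C_{1}=C_{2}$, contradicting our assumption. Hence $\mathcal{P}_{1}$ and $\mathcal{P}_{2}$ are two distinct simple oriented paths from $w$ to $u$ in $D$, which directly contradicts condition $i)$ of Lemma \ref{lemma:1}. This contradiction shows that no two distinct simple oriented cycles of $D$ can share an edge.

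I expect no serious obstacle here, since the essential observation has in fact already appeared inside the second part of the proof of Lemma \ref{lemma:1}, where it was noted that condition $i)$ forbids an edge from lying on two different simple cycles; the corollary merely records this consequence explicitly. The only point requiring a moment of care is the claim that removing one edge from a simple cycle produces a \emph{simple} path and that two distinct cycles through the same edge yield distinct such paths, but both follow at once from the definition of a simple oriented cycle given earlier in the paper.
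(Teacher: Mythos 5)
Your proof is correct and is essentially the paper's own argument: the paper likewise assumes two cycles share an edge $(u,v)$, extracts from each cycle an oriented path from $v$ to $u$, and derives a contradiction with path uniqueness. If anything, your version is slightly more careful (checking simplicity and distinctness of the two paths, and citing condition $i)$ of Lemma \ref{lemma:1}, which is the more precise reference than the paper's citation of Corollary \ref{corr:BEST}).
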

\begin{proof}
Assume $C_{1}$ and $C_{2}$ are two different cycles in $D$, which have the edge $(u,v)$ in common. Then, we will have at least two oriented paths from $v$ to $u$ in $D$ - one following $C_{1}$ and another one following $C_{2}$. This is a contradiction with Corollary \ref{corr:BEST}.
\end{proof}

\section{Bijection with a set of labeled rooted plane trees}
We will construct a bijection between the digraphs in $A_{n}$ and a set of labeled rooted plane trees on $n+1$ vertices. Let $U_n$ be the set of the unlabeled rooted plane trees with $n+1$ vertices. It is well-known that $|U_n|=C_n$ (see \cite[Theorem 1.5.1]{catalan}). Let $L_n$ be the set of the labeled rooted plane trees with $n+1$ vertices such that the root is always labeled as $0$ and the left-most child of the root is always labeled as $1$. We have $|L_n|= (n-1)!|U_n| = (n-1)!C_n$. Finally, let $L'_{n}\subset L_{n}$ be the set of the labeled trees in $L_{n}$, such that the vertex $2$ is in the subtree with root $1$. 

First, we define a map $f$ over $L_{n}$, such that $f(L'_{n})=L_{n}\setminus L'_{n}$ and $f(f(T)) = T$, i.e., an involution. This shows that $|L'_{n}| = |A_{n}|$. Then, we define a map $g: L'_{n}\to A_{n}$, which is shown to be a bijection. Below, we will denote by $T_{(x,j)}$ the $j$-th child (from left to right) of the vertex $x$ of a tree $T \in L_n$. 

\begin{definition}
Let $f:L_n \rightarrow L_n$ be a map, which switches the places of the subtree with root $1$ (excluding $1$) and the subtree with root $0$ (excluding $0$ and the subtree with root $1$), for every tree in $L_n$ (see Figure \ref{fig:f}). Formally, if $T \in L_n$, then $f(T)$ has the following properties. \\
For every $j\geq 1$:
\begin{itemize}
    \item If the vertex $v$ is the (j+1)-th child of the vertex $0$ in $T$, then $v$ is the j-th child of vertex $1$ in $f(T)$, i.e., $T_{(0, j+1)}=f(T)_{(1, j)}$.
    \item If the vertex $v$ is the j-th child of the vertex $1$ in $T$, then $v$ is the (j+1)-th child of vertex $0$ in $f(T)$, i.e., $T_{(1, j)}=f(T)_{(0, j+1)}$.
    \item All the other directed edges are left the same for both trees, i.e., $T_{(u, j)}=f(T)_{(u, j)}$ for $u \not \in \{0,1\}$. 
\end{itemize}
\end{definition}

\begin{figure}[h!]
\centering
\begin{tikzpicture}[scale = 0.85]
    \draw[black] (2,-3) node {$T$};
  \draw[red, very thick]  (0,0) -- (1,-1); 
  \filldraw[red] (0,0) circle (2pt) node[anchor=east] {2};
  \draw[red, very thick]  (2,0) -- (1,-1); 
  \filldraw[red] (2,0) circle (2pt) node[anchor=west] {3};
  \draw[blue, very thick]  (3,0) -- (3,-1); 
  \filldraw[blue] (3,0) circle (2pt) node[anchor=west] {6};
  \draw[blue, very thick]  (3,-1) -- (2,-2); 
  \filldraw[blue] (3,-1) circle (2pt) node[anchor=west] {5};
  \draw[blue, very thick]  (2,-1) -- (2,-2); 
  \filldraw[blue] (2,-1) circle (2pt) node[anchor=west] {4};
  \draw[black, very thick]  (1,-1) -- (2,-2); 
  \filldraw[black] (1,-1) circle (2pt) node[anchor=east] {1};
  \filldraw[black] (2,-2) circle (2pt) node[anchor=north] {$0$};
  \draw[->, ultra thick] (4,-1) -- (5,-1);
  
  \draw[black] (8,-3) node {$f(T)$};
  \draw[blue, very thick]  (8,1) -- (8,0); 
  \filldraw[blue] (8,1) circle (2pt) node[anchor=west] {6};
  
  \draw[blue, very thick]  (6,0) -- (7,-1); 
  \filldraw[blue] (6,0) circle (2pt) node[anchor=east] {4};
  \draw[blue, very thick]  (8,0) -- (7,-1); 
  \filldraw[blue] (8,0) circle (2pt) node[anchor=west] {5};
  \draw[red, very thick]  (9,-1) -- (8,-2); 
  \filldraw[red] (9,-1) circle (2pt) node[anchor=west] {3};
  \draw[red, very thick]  (8,-1) -- (8,-2); 
  \filldraw[red] (8,-1) circle (2pt) node[anchor=west] {2};
  \draw[black, very thick]  (7,-1) -- (8,-2); 
  \filldraw[black] (7,-1) circle (2pt) node[anchor=east] {1};
  \filldraw[black] (8,-2) circle (2pt) node[anchor=north] {$0$};
\end{tikzpicture}
\caption{Example of the action of the map $f$.} 
\label{fig:f}
\end{figure}
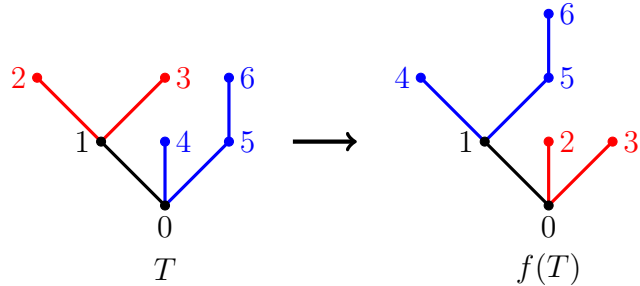

Note that $f(L'_{n})=L_{n}\setminus L'_{n}$ and $f(f(T)) = T$, i.e., $f^{-1}=f$. Therefore, $|L'_{n}| = \frac{|L_{n}|}{2}= \frac{(n-1)!C_{n}}{2} = |A_{n}|$. 

\begin{definition}
Let $g: L'_{n}\to A_{n}$ be a map, such that if $T \in L'_n$, $g(T) = D'$ is a digraph with $V(D') = [n]$ and $E(D')$, such that if $x$ is a vertex of $T$ with $r$ children, then:
\begin{enumerate}
    \item For every $i\in [1,r), \ (T_{(x,i)}, T_{(x,i+1)}) \in E(D')$.
    \item If $x=0$, then $(T_{(x,r)},T_{(x,1)}) \in E(D')$.
    \item If $x \not= 0$, then $(T_{(x,r)}, x) \in E(D')$ and $(x,T_{(x,1)}) \in E(D')$.\\
\end{enumerate}
\end{definition}

\begin{figure}[h!]
    \centering
\begin{tikzpicture}
        \draw[blue] (2,-3) node {$T$};
  \draw[blue, very thick]  (0,0) -- (1,-1); 
  \filldraw[blue] (0,0) circle (2pt) node[anchor=east] {2};
  \draw[blue, very thick]  (2,0) -- (1,-1); 
  \filldraw[blue] (2,0) circle (2pt) node[anchor=west] {3};
  \draw[blue, very thick]  (1,0) -- (1,-1); 
  \filldraw[blue] (1,0) circle (2pt) node[anchor=west] {7};
  \draw[blue, very thick]  (1,0) -- (0.4,1); 
  \filldraw[blue] (0.4,1) circle (2pt) node[anchor=east] {9};
  \draw[blue, very thick]  (1,0) -- (1.6,1); 
  \filldraw[blue] (1.6,1) circle (2pt) node[anchor=west] {8};
  \draw[blue, very thick]  (3,0) -- (3,-1); 
  \filldraw[blue] (3,0) circle (2pt) node[anchor=west] {6};
  \draw[blue, very thick]  (3,-1) -- (2,-2); 
  \filldraw[blue] (3,-1) circle (2pt) node[anchor=west] {5};
  \draw[blue, very thick]  (2,-1) -- (2,-2); 
  \filldraw[blue] (2,-1) circle (2pt) node[anchor=west] {4};
  \draw[blue, very thick]  (1,-1) -- (2,-2); 
  \filldraw[blue] (1,-1) circle (2pt) node[anchor=east] {1};
  \filldraw[blue] (2,-2) circle (2pt) node[anchor=north] {$0$};
  \draw[->, ultra thick] (4.75,-1) -- (5.75,-1);
  \draw[blue] (9,-3) node {$g(T)$};
  \filldraw[blue] (7,0) circle (2pt) node[anchor=east] {2};
  \filldraw[blue] (7.80,0) circle (2pt) node[anchor=north] {7};
  \filldraw[blue] (7.40,1) circle (2pt) node[anchor=east] {9};
  \filldraw[blue] (8.20,1) circle (2pt) node[anchor=west] {8};
  \filldraw[blue] (8.60,0) circle (2pt) node[anchor=west] {3};
  \filldraw[blue] (10,0) circle (2pt) node[anchor=south] {6};
  \filldraw[blue] (10,-1.2) circle (2pt) node[anchor=north west] {5};
  \filldraw[blue] (9,-1.2) circle (2pt) node[anchor=south] {4};
  \filldraw[blue] (7.9,-1.2) circle (2pt) node[anchor=east] {1};
  \filldraw[blue] (9,-2.4) circle (2pt);
  \draw[blue, very thick, ->]  (7.8,-1.06) -- (7.1,-0.12);
  \draw[blue, very thick, ->]  (7.08,0) -- (7.72,0);
  \draw[blue, very thick, ->]  (7.88,0) -- (8.52,0);
  \draw[blue, very thick, ->]  (7.76,0.1) -- (7.44,0.9);
  \draw[blue, very thick, ->]  (8.16,0.9) -- (7.84,0.1);
  \draw[blue, very thick, ->]  (7.5,1) -- (8.15,1);
  \draw[blue, very thick, ->]  (8.5,-0.13) -- (7.96,-1.05);
  \draw[blue, very thick, ->]  (7.9,-1.2) -- (8.8,-1.2);
  \draw[blue, very thick, ->]  (9,-1.2) -- (9.8,-1.2);
  \draw[blue, very thick, ->]  (9.85,-1.1) -- (9.85,-0.1);
  \draw[blue, very thick, ->]  (10.15,-0.1) -- (10.15,-1.1);
  \draw[blue, very thick, ->]    (10,-1.4) to [out=-90,in=-90] (7.9,-1.4);
\end{tikzpicture}
    \caption{Example of the action of the map $g$.}
    \label{fig:g}
\end{figure}
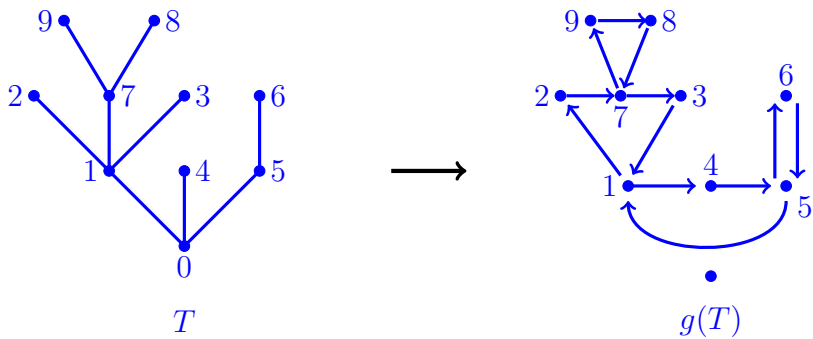

\begin{lemma}
\label{lemma:straight}
For every $T\in L'_{n}$, $g(T) \in A_n$, i.e., $g(T)$ has exactly one Eulerian tour.
\end{lemma}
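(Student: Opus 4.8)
The plan is to read off the structure of $D'=g(T)$ directly from the tree and recognize it as a \emph{directed cactus}. For each internal vertex $x$ of $T$ (a vertex with $r\ge 1$ children), rules (1)--(3) assemble a single directed cycle $C_x$ on the vertex set $\{x\}\cup\{T_{(x,1)},\dots,T_{(x,r)}\}$: when $x\neq 0$ it is $x\to T_{(x,1)}\to\cdots\to T_{(x,r)}\to x$, while for $x=0$ with $r\ge 2$ it is $T_{(0,1)}\to\cdots\to T_{(0,r)}\to T_{(0,1)}$. (When the root has a single child, rule (2) would create the loop $(T_{(0,1)},T_{(0,1)})$; since $D'$ must be loopless we discard it, so the root contributes a cycle only when it has at least two children.) By construction $E(D')$ is exactly the union of the edge sets of the $C_x$, so the first task is to understand how these cycles overlap.

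First I would show that $D'$ is a directed cactus. For distinct internal $x,y$, the sets $V(C_x)$ and $V(C_y)$ can meet only if one of $x,y$ is a child of the other in $T$, and then they share exactly that child; in particular $|V(C_x)\cap V(C_y)|\le 1$, so two cycles never share both endpoints of an edge and hence are pairwise edge-disjoint. Consequently the cycle-intersection graph is a tree. A short check then shows every vertex of $[n]$ lies in at least one cycle: a leaf of $T$ lies in its parent's cycle, which exists because $2$ lies in the subtree of $1$ and hence $1$ is never a leaf (this is exactly where $T\in L'_n$ is used), and an internal vertex is the apex of its own cycle. Thus $D'$ has no isolated vertices, is connected and loopless, and is balanced (each vertex has equal in- and out-degree, one of each per cycle through it).

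The crucial quantitative input is that every vertex lies in \emph{at most two} of the cycles: as a child-participant in its parent's cycle and, if internal, as the apex of its own cycle. Hence every out-degree is $1$ or $2$. I would then prove, by induction on the number $k$ of cycles, that a connected balanced directed cactus in which every vertex lies in one or two cycles has a unique Eulerian tour up to cyclic shift. For $k=1$ the digraph is a single directed cycle, whose only Eulerian tour is the cycle itself. For $k>1$, pick a leaf $C$ of the cycle-intersection tree; it meets the rest of $D'$ in a single cut vertex $p$, and since every vertex lies in at most two cycles, $p$ lies in $C$ and in exactly one other cycle, so it has out-degree $2$. Deleting the edges of $C$ together with the vertices of $C$ other than $p$ yields a smaller cactus $\widehat D$ satisfying the same hypotheses, in which $p$ now has out-degree $1$.

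To finish the induction, observe that the vertices of $C$ other than $p$ have in- and out-degree $1$, so in \emph{any} Eulerian tour of $D'$ the edges of $C$ appear as one contiguous detour $p\to\cdots\to p$; contracting this detour gives an Eulerian tour of $\widehat D$, and conversely the detour can only be spliced back in at a passage of the tour through $p$. Because $p$ has out-degree $1$ in $\widehat D$, every Eulerian tour of $\widehat D$ passes through $p$ exactly once, so the splicing point---and therefore the Eulerian tour of $D'$---is determined uniquely, up to cyclic shift, by the inductively unique tour of $\widehat D$. This yields exactly one Eulerian tour of $D'$; together with the absence of isolated vertices it gives $g(T)=D'\in A_n$. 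The main obstacle is the bookkeeping that makes the cactus picture precise: proving the at-most-one-shared-vertex claim and the resulting tree structure, correctly dispatching the degenerate root so that no loop is ever created, and justifying the unique detour insertion. As an alternative finish, the same cactus picture lets one verify conditions (i) and (ii) of Lemma~\ref{lemma:1} in place of the induction.
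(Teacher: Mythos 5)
Your proposal is correct, but it takes a genuinely different route from the paper's proof. The two arguments share only the opening observation: the edges of $g(T)$ decompose into the cycles $C_x$ attached to the internal vertices of $T$, and every vertex of $g(T)$ lies in one or two of these cycles. From there the paper invokes its Lemma \ref{lemma:1} (whose proof rests on the BEST theorem through Corollary \ref{corr:BEST}), so all that remains for it is to check that any two vertices of $g(T)$ are joined by a unique simple oriented path, which it does by lifting the unique undirected path in $T$, edge by edge, to oriented arcs of the cycles, with a special splice when the path passes through the root. You never appeal to Lemma \ref{lemma:1} or to the BEST theorem; instead you make the cactus structure explicit (pairwise edge-disjoint cycles, tree-shaped intersection graph) and prove existence and uniqueness of the Eulerian tour directly, by induction on the number of cycles: peel off a leaf cycle $C$ at its cut vertex $p$, note that any Eulerian tour must traverse $C$ as one contiguous detour based at $p$, and observe that the detour can be re-inserted in exactly one way because $p$ has outdegree $1$ in the reduced graph. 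The paper's route buys brevity---granted Lemma \ref{lemma:1}, path-uniqueness is a short verification, whereas your induction re-derives tour-uniqueness from scratch. Your route buys self-containedness, and it is actually more careful on two points the paper glosses over: you explicitly dispatch the degenerate case where the root of $T$ has a single child, in which rule (2) of the definition of $g$ would literally create a loop (this case genuinely occurs, e.g.\ for the path $0$--$1$--$2$ when $n=2$, and the paper's definition and proof are silent about it); and your induction never needs to know that the $C_x$ are the \emph{only} simple oriented cycles of $g(T)$, a fact that condition \emph{ii)} of Lemma \ref{lemma:1} technically requires but the paper does not establish.
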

\begin{proof}
First, note that every vertex $x$ of $g(T)$ belongs to one or two cycles:
\begin{itemize}
    \item The cycle where $x$ and its parent from $T$ both belong.
    \item In case $x$ has children in $T$, the cycle formed by $x$ and its children in $g(T)$.
\end{itemize}
By Lemma \ref{lemma:1}, it remains to show that there exists a unique oriented simple path between any two vertices of $g(T)$, for arbitrary $T\in L'_{n}$. To see this, observe that if $(v,w)\in E(T)$ and $v,w\neq 0$ or if both $v$ and $w$ are children of the root $0$ in $T$, then we have a unique oriented simple path between $v$ and $w$ in $g(T)$, which is part of a single cycle. For instance, the edge $(1,3)$ in the graph $T$ shown at Figure \ref{fig:g} corresponds to the oriented simple path $1273$ in $g(T)$, whereas the path $514$ in $g(T)$ corresponds to the pair of children $4$ and $5$ of the root of $T$. 

We will show that since we have a unique non-oriented path $\mathcal{P}$ between any two vertices $u$ and $v$ in $T$, where $u,v\neq 0$, we will also have a unique oriented simple path $\mathcal{P}^{\text{or}}$ between $u$ and $v$ in $g(T)$. If the vertex $0$ is part of $\mathcal{P}$, then we must have vertices $h_{1}$ and $h_{2}$ in $T$, such that the edges $(h_{1},0)$ and $(0,h_{2})$ are part of $\mathcal{P}$ (since $v,w\neq 0$). Hence, $h_{1}$ and $h_{2}$ are two children of the root $0$. Replace the edges $(h_{1},0)$ and $(0,h_{2})$ of $\mathcal{P}$ with the unique oriented simple path between $h_{1}$ and $h_{2}$ in $g(T)$. Replace all the other edges of $\mathcal{P}$ with the corresponding oriented simple paths to obtain $\mathcal{P}^{\text{or}}$. For example, the unique path $\mathcal{P}$ between $3$ and $8$ in the graph $T$ on Figure \ref{fig:g} is comprised of the edges $(3,1)$, $(1,7)$, $(7,8)$. The oriented paths corresponding to these edges are $31$, $127$ and $798$, respectively. The union of these paths, namely $312798$, gives the unique path $\mathcal{P}^{\text{or}}$ between $3$ and $8$ in $g(T)$. Another example is the path $71056$ in $T$, which transforms to the path $73156$ in $g(T)$.
\end{proof}

\begin{lemma}
\label{lemma:inverse}
For every digraph $D\in A_{n}$, there exists a unique labeled tree $T\in L'_{n}$, for which $g(T)=D$. i.e., $g$ has an inverse.
\end{lemma}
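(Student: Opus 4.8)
The plan is to exhibit an explicit reconstruction procedure $h$ that takes a digraph $D\in A_n$ and returns a labeled rooted plane tree, and to prove that $h(g(T))=T$ for every $T\in L'_n$. Since Lemma~\ref{lemma:straight} already gives $g(L'_n)\subseteq A_n$ and we have recorded $|L'_n|=|A_n|$, it suffices to establish that $g$ is injective: a left inverse on these finite, equinumerous sets forces $g$ to be a bijection, which is precisely the existence-and-uniqueness assertion of the lemma. Thus I only ever need to run $h$ on digraphs of the form $g(T)$, which conveniently carry extra structure that an arbitrary element of $A_n$ might not visibly possess.

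The geometric idea is that $g$ ``blows up'' each internal vertex $x$ of $T$ into a simple cycle --- the children of $x$ read left to right, together with $x$ itself (or, when $x=0$, the children alone) --- so the inverse must contract each such cycle back to a single vertex. Hence the first step is to recover the cycles of $D=g(T)$. By Corollary~\ref{corr:lem1} no two simple cycles share an edge, and (arguing as in the proof of Lemma~\ref{lemma:1} that every vertex lies on a cycle) every edge lies on exactly one simple cycle; so the simple cycles canonically partition $E(D)$, and by Lemma~\ref{lemma:1}(ii) each vertex lies on one or two of them. One checks, again from Lemma~\ref{lemma:1}(i), that two distinct cycles meet in at most one vertex and that the resulting cycle-intersection graph is a tree, so the cycles of $D$ are organized in a tree mirroring the internal vertices of $T$.

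The crux is to root this cycle-tree correctly, that is, to single out the cycle $C_0$ coming from the root of $T$ (the cycle on the children of $0$, which carries no ``head'' vertex). Here I use the defining property of $L'_n$, that vertex $2$ is a descendant of $1$: vertex $1$ is the left-most child of $0$ and has at least one child, so it lies on exactly two cycles, and the first edge of the unique simple oriented path from $1$ to $2$ (unique by Lemma~\ref{lemma:1}(i)) leaves $1$ along its \emph{own} cycle $C_1$; I then declare $C_0$ to be the other cycle through $1$. With $C_0$ identified, $h$ reconstructs $T$ recursively: the children of the root $0$ are the vertices of $C_0$ listed in cyclic order starting from $1$, and whenever a placed vertex $v$ of out-degree $2$ is reached, its not-yet-used (``home'') cycle $C_v=v\to c_1\to\cdots\to c_r\to v$ supplies its children $c_1,\dots,c_r$ in that order, while out-degree-$1$ vertices become leaves. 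Unwinding the three edge rules defining $g$ shows this returns exactly $T$.

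The step I expect to be the main obstacle is proving that this recursion is well defined and faithful: that the simple-cycle decomposition and the cycle-tree are forced by $D$ alone, that $C_0$ is correctly and unambiguously identified (this is where the hypothesis that $2$ is a descendant of $1$ is essential, and where one must verify the $1\to 2$ path indeed exits along $C_1$ rather than along $C_0$), and that each out-degree-$2$ vertex is entered through exactly one ``parent-side'' cycle, so that its remaining cycle is unambiguously its home and every cycle is visited exactly once. Once these consistency facts are in hand, the equality $h(g(T))=T$ is a direct computation from the definition of $g$, injectivity of $g$ follows, and $|L'_n|=|A_n|$ promotes it to the desired bijection.
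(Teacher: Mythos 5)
Your reconstruction procedure is essentially the paper's own (decompose $D$ into edge-disjoint simple cycles, contract each cycle to a vertex, and use the unique oriented path from $1$ to $2$ to decide which of the cycles through $1$ becomes the root cycle), but the logical frame you put around it has a genuine gap: circularity. You reduce the lemma to injectivity of $g$ by invoking ``$|L'_n|=|A_n|$'' as a recorded fact. That equality is not available at this point --- it is precisely what the bijection $g$ is meant to establish. What the involution $f$ actually proves is only $|L'_n|=\tfrac{1}{2}|L_n|=\tfrac{1}{2}(n-1)!\,C_n$; the further equality with $|A_n|$ is Theorem \ref{th:main} itself (the paper's phrase ``this shows that $|L'_{n}| = |A_{n}|$'' is a forward-looking summary of the plan, not a fact established by $f$). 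So deducing surjectivity of $g$ from injectivity plus equinumerosity assumes the theorem you are trying to prove. You must instead prove surjectivity directly: run your reconstruction $h$ on an \emph{arbitrary} $D\in A_n$ --- exactly the case you explicitly excuse yourself from (``I only ever need to run $h$ on digraphs of the form $g(T)$'') --- show it is well defined there, that its output lies in $L'_n$, and that $g(h(D))=D$. This is what the paper does with its recursive \emph{build-subtree} procedure, and the structural facts you cite (edge-disjointness of cycles from Corollary \ref{corr:lem1}, one or two cycles per vertex from Lemma \ref{lemma:1}) hold for every $D\in A_n$, not just for digraphs in the image of $g$, so the upgrade is available to you; but as written the argument does not prove the lemma.

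There is also a smaller error in your rooting step, even restricted to the image of $g$: you claim vertex $1$ always lies on exactly two cycles because it has a child. If $1$ is the \emph{only} child of the root $0$, the root cycle is degenerate and $1$ lies on a single cycle (the one through its own children), so there is no ``other cycle through $1$'' to declare as $C_0$. The paper treats this as a separate case: when $1$ lies on just one cycle $C$, it builds the subtree of $1$ from $C$ and then attaches $1$ as the unique child of $0$. Your procedure needs the same case split.
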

\begin{proof}
Let $D$ be an arbitrary digraph in $A_{n}$. Below, we describe the procedure $build-subtree(r,C,T,D)$ that will be used to obtain the tree $T$, for which $g(T)=D$. The first argument, $r$, is a vertex in $D$ and the second argument, $C$, is a cycle in $D$ that contains $r$.

\begin{pseudo}
$build-subtree(r,C,T,D)$: \\+
For each $v \in C$: \\+
if $v\neq r$: \\+
add an edge $(r,v)$ to $E(T)$. \\
if $v$ is part of a cycle $C_{1}\neq C$: \\+
$build-subtree(v,C_{1},T)$.
\end{pseudo}
Initially, let $T$ be an empty tree with $n+1$ vertices, i.e., let $V(T)=\{0,1,\ldots ,n\}$ and let $E(T)=\emptyset$. Take the vertex with label $1$ in $D$. By Lemma \ref{lemma:1}, this vertex belongs to one or two simple oriented cycles. Suppose that the vertex belongs to one such cycle and let $C$ denotes this cycle. Then, run $build-subtree(1,C,T,D)$ and add an edge $(0,1)$ to $E(T)$. One can easily show that the resulting graph, $T$, is a tree in $L'_{n}$. First, $T$ is connected since the execution of $build-subtree(1,C,T,D)$ will reach every vertex of $D$ and connect this vertex to an already reached vertex. In addition, if we have two different paths between two vertices $u$ and $v$ of $T$, then we will be able to find two simple oriented paths between $u$ and $v$ in $D$, which contradicts Lemma \ref{lemma:1}. Finally, $n\geq 2$ and the only vertex of $T$, which is not in the subtree with root $1$, is the vertex $0$. Thus, $2$ is in that subtree and $T\in L'_{n}$.

Now, suppose that the vertex $1$ belongs to two different cycles $C_{1}$ and $C_{2}$. Then, find the unique oriented simple path between $1$ and $2$ in $D$. This path has to have an edge in common with either $C_{1}$ or $C_{2}$, but not with both. Otherwise, we will have a contradiction with Corollary \ref{corr:lem1}. Without lose of generality, let this be $C_{2}$. Execute $build-subtree(0,C_{1},T,D)$. The graph $T$, obtained at the end, will be a tree in $L'_{n}$ (see Figure \ref{fig:inverse}).
\end{proof}

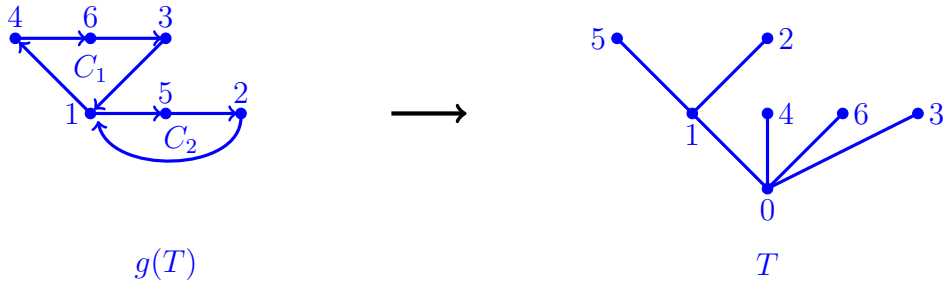
\begin{figure}[h!]
    \centering
\begin{tikzpicture}
  \draw[blue] (3,-3) node {$g(T)$};
  \filldraw[blue] (2,-1) circle (2pt) node[anchor=east] {1};
  \filldraw[blue] (3,-1) circle (2pt) node[anchor=south] {5};
  \filldraw[blue] (4,-1) circle (2pt) node[anchor=south] {2};
  \draw[blue, very thick, ->]    (2,-1) to [out=0,in=180] (2.95,-1);
  \draw[blue, very thick, ->]    (3,-1) to [out=0,in=180] (3.95,-1);
  \draw[blue, very thick, ->]    (4,-1.05)to [out=-90,in=-80] (2.1,-1.1);
  \draw[blue] (3.2,-1.35) node {$C_{2}$};
  
  \filldraw[blue] (1,0) circle (2pt) node[anchor=south] {4};
  \draw[blue, very thick, ->]    (2,-1)to [out=135,in=-45] (1.05,-0.05);
  \filldraw[blue] (2,0) circle (2pt) node[anchor=south] {6};
  \draw[blue, very thick, ->]    (1,0)to [out=0,in=180] (1.95,0);
  \filldraw[blue] (3,0) circle (2pt) node[anchor=south] {3};
  \draw[blue, very thick, ->]    (2,0)to [out=0,in=180] (2.95,0);
  \draw[blue, very thick, ->]    (3,0)to [out=-135,in=45] (2.05,-0.95);
  \draw[blue] (2,-0.4) node {$C_{1}$};
  
  \draw[->, ultra thick] (6,-1) -- (7,-1);
        \draw[blue] (11,-3) node {$T$};
  \draw[blue, very thick]  (9,0) -- (10,-1); 
  \filldraw[blue] (10,-1) circle (2pt) node[anchor=north] {1};
  \filldraw[blue] (9,0) circle (2pt) node[anchor=east] {5};
  \draw[blue, very thick]  (11,0) -- (10,-1); 
  \filldraw[blue] (11,0) circle (2pt) node[anchor=west] {2};
  \draw[blue, very thick]  (13,-1) -- (11,-2); 
  \filldraw[blue] (13,-1) circle (2pt) node[anchor=west] {3};
  \draw[blue, very thick]  (12,-1) -- (11,-2); 
  \filldraw[blue] (12,-1) circle (2pt) node[anchor=west] {6};
  \draw[blue, very thick]  (11,-1) -- (11,-2); 
  \filldraw[blue] (11,-1) circle (2pt) node[anchor=west] {4};
  \draw[blue, very thick]  (10,-1) -- (11,-2); 
  \filldraw[blue] (11,-2) circle (2pt) node[anchor=north] {$0$};
\end{tikzpicture}
    \caption{Example of the action of the inverse map $g^{-1}$.}
    \label{fig:inverse}
\end{figure}

Lemmas \ref{lemma:straight} and \ref{lemma:inverse} imply that the map $g$ is a bijection.

\section{A bijection with parentheses arrangements}

In this section, we give a second combinatorial proof of Theorem 1, via a bijection between the digraphs in $A_{n}$ and a set of valid parentheses arrangements. Suppose that you have $n$ pairs of opening and closing parenthesis, such that the two parenthesis in each pair are labeled with the numbers in $[n]$. A valid labeled parentheses arrangement is an ordering of these $2n$ parenthesis, such that we cannot have two interlaced pairs, e.g., $(_{i}(_{j})_{i})_{j}$ for some $i,j\in [n]$. A valid unlabeled parentheses arrangement is a sequence of unlabeled opening and closing parentheses that can be obtained by forgetting the labels of a valid labeled arrangement. One can easily check that a sequence of $n$ opening and $n$ closing parentheses is valid if and only if every prefix of the sequence has at least as many opening parentheses as closing parentheses. The number of valid arrangements of $n$ unlabeled pairs of parentheses is $C_n$ \cite[Theorem 1.5.1]{catalan}. Thus the number of such arrangements for labeled pairs is $n!C_n$.

To construct a bijection with the set of digraphs $A_{n}$, let us first note that one can assume that all vertices of an Eulerian digraph have indegree and outdegree $2$, provided that we allow digraphs with loops.

\begin{lemma}
If $B_n$ is the set of all Eulerian digraphs on the vertex set $[n]$ (possibly with loops) with all vertices of indegree and outdegree $2$, then $|B_n|=|A_n|$
\end{lemma}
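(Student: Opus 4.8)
The plan is to establish a bijection (or at least an equality of cardinalities) between $A_n$ and $B_n$ by a local \emph{vertex-splitting} operation. The key structural fact, coming from Corollary \ref{corr:BEST}, is that every vertex of a digraph in $A_n$ has outdegree (equivalently indegree, by balance) equal to $1$ or $2$. The vertices of outdegree $2$ are already of the desired type, and Lemma \ref{lemma:1}(ii) tells us such a vertex sits on exactly two simple oriented cycles, whereas each outdegree-$1$ vertex sits on exactly one. So the whole discrepancy between $A_n$ and $B_n$ lives at the outdegree-$1$ vertices, each of which I want to promote to outdegree $2$ \emph{without changing the number of vertices}. The natural device is to attach a loop at every outdegree-$1$ vertex: a loop at $v$ raises both $\mathrm{indeg}(v)$ and $\mathrm{outdeg}(v)$ by exactly one, bringing a vertex from degree $1$ to degree $2$ while leaving all other degrees untouched.

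Concretely, I would define a map $\Phi\colon A_n\to B_n$ that takes $D\in A_n$ and adds a loop $(v,v)$ at each vertex $v$ with $\mathrm{outdeg}_D(v)=1$, leaving vertices of outdegree $2$ alone. First I would check that $\Phi(D)$ indeed lands in $B_n$: it is loopless-free by construction only where needed, it remains connected and balanced, every vertex now has indegree and outdegree exactly $2$, and — crucially — it still has a \emph{unique} Eulerian tour up to cyclic shift. For the uniqueness I would invoke the BEST theorem (Theorem \ref{th:BEST}): the number of Eulerian tours starting at a fixed edge is $\tau(D,v)\prod_u(\mathrm{outdeg}(u)-1)!$. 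Adding a loop at a vertex creates no new oriented spanning subtrees (a loop is never an edge of a spanning tree), so $\tau$ is unchanged and stays equal to $1$; and although the product term now has factors $(2-1)!=1$ at the newly-promoted vertices, each such factor is still $1$, so the total count of Eulerian tours remains $1$. Hence $\Phi(D)\in B_n$.

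For the inverse I would define $\Psi\colon B_n\to A_n$ by deleting every loop. The content to verify is that this is well-defined and two-sided inverse to $\Phi$. The main point is that in a digraph of $B_n$ a loop at $v$ is forced to behave in the unique Eulerian tour as an isolated self-return, and deleting it drops $\mathrm{indeg}(v)$ and $\mathrm{outdeg}(v)$ from $2$ to $1$ while preserving balance, connectivity, and — again by BEST, since deleting a loop does not affect $\tau$ and only changes a $(2-1)!=1$ factor into a $(1-1)!=1$ factor — the uniqueness of the Eulerian tour. Thus $\Psi(E)\in A_n$. That $\Psi\circ\Phi=\mathrm{id}$ and $\Phi\circ\Psi=\mathrm{id}$ is then immediate: $\Phi$ adds a loop exactly at the vertices that $\Psi$ will strip, and the set of loop-vertices of $E\in B_n$ is exactly the set of vertices that were promoted.

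The step I expect to be the main obstacle is the clean bookkeeping of \emph{which} vertices receive loops, i.e.\ verifying that a vertex of $B_n$ has a loop if and only if it has no other repeated neighbor structure, so that $\Phi$ and $\Psi$ are genuinely inverse and no vertex accidentally ends up with a loop \emph{and} outdegree $2$ from non-loop edges (which would push its outdegree to $3$ and leave $B_n$). Ruling this out amounts to observing that for $D\in A_n$ a vertex of outdegree $1$ has, by Lemma \ref{lemma:1}(ii), no second cycle through it, so the added loop is the only degree-$2$ contribution and the total outdegree is exactly $2$; and conversely a loop-vertex in $B_n$ must have its single non-loop out-edge, forcing outdegree $1$ after deletion. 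Once this is pinned down the bijection is established and $|B_n|=|A_n|$ follows.
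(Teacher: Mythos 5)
Your proposal is correct and is essentially the paper's own proof: the identical bijection that adds a loop at each outdegree-$1$ vertex of $D\in A_n$ and, inversely, deletes all loops from $D'\in B_n$, with the same bookkeeping that loop-vertices of $B_n$ correspond exactly to outdegree-$1$ vertices of $A_n$. The only difference is in how uniqueness of the Eulerian tour is certified: you invoke the BEST theorem (Theorem \ref{th:BEST}) and note that loops never lie in spanning arborescences while all factors $(\mathrm{outdeg}(u)-1)!$ remain equal to $1$ --- which is legitimate, as the cited version of the theorem allows loops --- whereas the paper argues directly that the placement of loops within the unique tour is forced.
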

\begin{proof}
Given a digraph $D$ in $A_n$, Corollary \ref{corr:BEST} implies that all vertices have outdegree $1$ or $2$ and the same indegree. Moreover, the single Eulerian tour of $D$ passes exactly once through each vertex of outdegree $1$. Hence, adding a loop to every vertex of outdegree $1$ gives an element of $B_n$.

Conversely, given a digraph $D'$ in $B_n$, deleting all loops gives an element of $A_n$. Indeed, the loopless digraph still has a unique Eulerian tour, which is just the tour for $D'$ without the loops (the uniqueness follows because adding back loops must give an Eulerian tour for $D'$). These two maps are inverses of each other and thus give a bijection between $A_n$ and $B_n$.
\end{proof}

Now, let $B_n^*$ be the set of digraphs in $B_n$ together with an identified edge. Since all digraphs in $B_n$ have $2n$ edges, we have $|B_n^*|=2n|B_n|=2n|A_n|$. We will give a bijection between $B_n^*$ and the set of valid arrangements of $n$ labeled pairs of parentheses, which will show that $2n|A_n|=n!C_n$, that is, $|A_n|=\frac{1}{2}(n-1)!C_n$.

\begin{theorem}
There exists a bijection between $B_n^*$ and the set of valid arrangements of $n$ labeled pairs of parentheses.
\end{theorem}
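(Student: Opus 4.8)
The plan is to build the bijection through the unique Eulerian tour, read off the marked edge. Fix $(D,e)\in B_n^*$. Since every vertex of $D$ has outdegree $2$ and the tour traverses each of the $2n$ edges exactly once, the tour departs from each vertex exactly twice; hence the unique Eulerian tour written so as to begin with the marked edge $e$ is a sequence $a_1a_2\cdots a_{2n}$ with $(a_1,a_2)=e$ in which every label of $[n]$ occurs exactly twice. I would define $\Phi(D,e)$ to be the length-$2n$ labeled parenthesis string obtained by replacing the \emph{first} occurrence of each vertex $v$ by an opening parenthesis $(_v$ and the \emph{second} occurrence by a closing parenthesis $)_v$. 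Because each label's opener precedes its closer, this is a genuine labeled parenthesis string; the content is that it is \emph{valid}, which I treat below.

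For the inverse I would send a valid labeled arrangement, with underlying label sequence $a_1\cdots a_{2n}$, to the digraph $D$ on $[n]$ with edge set $\{(a_i,a_{i+1}):i\in[2n]\}$ (indices modulo $2n$) together with the marked edge $(a_1,a_2)$; call this $\Psi$. Each vertex then has indegree and outdegree exactly $2$, and the $2n$ edges are distinct: a repeated edge $(a_i,a_{i+1})=(a_j,a_{j+1})$ would place the occurrences of two distinct labels in the interlacing pattern $v,w,v,w$, which is forbidden. Thus the closed walk $a_1\cdots a_{2n}a_1$ is an Eulerian tour beginning with $(a_1,a_2)$. On the level of length-$2n$ sequences $\Phi$ and $\Psi$ are visibly mutually inverse — encoding a tour by first/second occurrences and decoding a string into consecutive edges undo each other — so once both maps are shown to be well defined, the bijection follows.

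The crux, and the main obstacle, is the equivalence between the non-interlacing condition on strings and the unique-tour condition on digraphs, both of which I would settle through Lemma \ref{lemma:1}. For $\Phi$: writing $D_0$ for $D$ with its loops deleted, the lemma giving $|B_n|=|A_n|$ puts $D_0\in A_n$, so $D_0$ satisfies Lemma \ref{lemma:1}(i). If $\Phi(D,e)$ interlaced, with the occurrences of distinct $u,v$ at positions $p_1<q_1<p_2<q_2$, then $a_{p_1}\cdots a_{q_1}$ and $a_{p_2}\cdots a_{q_2}$ are two edge-disjoint walks from $u$ to $v$; extracting a simple oriented path from each (loops are irrelevant between distinct vertices) yields two distinct simple oriented $u\to v$ paths in $D_0$, contradicting Lemma \ref{lemma:1}(i). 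Hence $\Phi(D,e)$ is non-interlaced, and since each opener precedes its closer every prefix has at least as many openers as closers, so the string is valid.

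For $\Psi$ I must show the constructed $D$ lies in $B_n$, i.e. has a \emph{unique} Eulerian tour, and I expect this to be the hardest step. The plan is to verify that $D_0$ (delete loops) satisfies both conditions of Lemma \ref{lemma:1}: condition (i) by running the previous argument in reverse — a second simple $u\to v$ path would force an interlaced quadruple of positions, contradicting validity — and condition (ii) by reading off, from the laminar (nesting) structure of the arrangement, the one or two simple oriented cycles through each vertex (a leaf of the nesting forest yields a loop and hence a single cycle, while a node together with its consecutive children yields the cycles, exactly as produced by the map $g$ of the previous section). With both conditions verified, $D_0\in A_n$, so $D\in B_n$ and $(D,e)\in B_n^*$, and the mutually inverse maps $\Phi$, $\Psi$ give the claimed bijection.
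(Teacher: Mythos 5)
Your forward map $\Phi$ and its validity proof are correct: an interlaced pattern gives two index-disjoint segments of the Eulerian tour, hence two edge-disjoint walks from $u$ to $v$, hence two distinct simple oriented paths in the loopless digraph, contradicting condition i) of Lemma \ref{lemma:1}. This differs from the paper, which instead swaps the two middle tour segments to exhibit a second Eulerian tour, but both arguments work. Your verification that $\Psi(w)$ has $2n$ distinct edges, and the observation that $\Phi$ and $\Psi$ undo each other once both are shown to be well defined, are also fine.

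The genuine gap sits exactly in the step you flag as hardest: showing that $\Psi(w)$ has a \emph{unique} Eulerian tour. Your plan for condition i) of Lemma \ref{lemma:1} --- that a second simple $u\to v$ path ``would force an interlaced quadruple of positions, by running the previous argument in reverse'' --- does not work as stated. In the forward direction the two walks being compared are \emph{contiguous segments of the tour}, so disjoint position intervals immediately yield edge-disjoint walks; in the reverse direction you start from two arbitrary simple oriented paths in $\Psi(w)$, whose edges correspond to scattered pairs of consecutive positions of $w$, not to contiguous blocks, and nothing in the forward argument tells you how two such paths would produce four positions in the pattern $u,v,u,v$. Any proof of this implication must exploit the global nesting structure of $w$; it is essentially a restatement of Lemma \ref{lemma:straight}. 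Fortunately the repair is already latent in your sketch of condition ii): the digraph $\Psi(w)$ with loops removed is exactly $g(T)$, where $T$ is the nesting forest of $w$ attached to an auxiliary root $0$ (a leaf pair gives a loop, a pair with nested children gives the cycle through those consecutive children, and the top-level pairs give the root cycle). Since the proof of Lemma \ref{lemma:straight} nowhere uses the labeling constraints defining $L'_n$, it applies verbatim and gives both conditions of Lemma \ref{lemma:1} at once, hence $\Psi(w)\in B_n$. Alternatively, the paper's own argument is self-contained and avoids Section 2 entirely: classify the vertices other than $i$ as type $A$ (both parentheses nested inside the pair of $i$) or type $B$ (both outside), observe that every edge leaving a type-$A$ vertex ends at a type-$A$ vertex or at $i$ (and similarly for $B$), and conclude that a tour exiting $i$ the wrong way would have to reuse the edge $\ell\to i$ in order to ever reach the type-$A$ vertices. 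Either completion is needed; the one-line reversal claim cannot stand on its own.
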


\begin{proof}

[first part: Digraphs in $B_{n}^{*}$ $\to$ valid parentheses arrangements].

Let $D$ be a digraph in $B_n^*$ with identified edge $e$. We define a parentheses arrangement $h(D)$ as follows:

Following the unique Eulerian tour of $D$, starting at $e$, open the $i$ - th pair of parentheses when you pass through the vertex $i$ for the first time and close the $i$ - th pair of parentheses when you pass through the vertex $i$ for the second time (see Figure \ref{fig:par_example} below) .
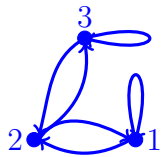
\begin{figure}[h!]
    \centering
    \begin{tikzpicture}[scale = 1.35]
   \filldraw[blue] (1,0) circle (2pt) node[anchor=east] {2};
  \draw[blue, very thick, ->]  (1.05,0) to [out=60,in=150] (1.95,0);
  \draw[blue, very thick, ->]  (1.95,0) to [out=210,in=-30] (1.05,0);
  
  \filldraw[blue] (2,0) circle (2pt) node[anchor=west] {1};
  \path[blue, very thick, ->] (2,0) edge [loop above]  (2,0);
  \filldraw[blue] (1.5,1) circle (2pt) node[anchor=south] {3};
  \path[blue, very thick, ->] (1.5,1) edge [loop right]  (1.5,1);
  \draw[blue, very thick, ->]  (1.5,1) to [out=210,in=75] (1.05,0.05);
  \draw[blue, very thick, ->]  (1.05,0.05) to [out=30,in=-80] (1.5,0.95);
  \end{tikzpicture}
    \caption{The digraph in $B_{3}^{*}$ (with the edge $2\to 1$ being identified) that yields the string $(_{1})_{1}(_{2}(_{3})_{3})_{2}$}
    \label{fig:par_example}
\end{figure}

To show that the resulting string of parentheses is valid, we have to show that we cannot have two interlaced pairs of parentheses, e.g., $$\cdots(_i\cdots (_j \cdots )_i\cdots )_j \cdots.$$ In other words, the unique tour cannot have the form $$i \xrightarrow[]{a} j \xrightarrow[]{b} i \xrightarrow[]{c} j \xrightarrow[]{d} i$$ for some walks $a,b,c,d$. But this is clearly impossible, because otherwise we would have a second Eulerian tour $i \xrightarrow[]{a} j \xrightarrow[]{d} i \xrightarrow[]{c} j \xrightarrow[]{b} i$.

[second part: Valid parentheses arrangements $\to$ Digraphs in $B_{n}^{*}$] Given a valid parentheses arrangement $w=(_x \cdots )_y$, we obtain a digraph $h^{-1}(w)\in B_n^*$ by putting an edge between the corresponding vertices of any pair of consecutive parentheses (from the first parenthesis to the second) and an edge from $y$ to $x$. The identified edge is $y\to x$.

Clearly, every vertex in $h^{-1}(w)$ has indegree and outdegree $2$ and that there exists an Eulerian tour $T$, given by the order of the parentheses' labels in $w$. Hence, we just have to show that $T$ is the unique Eulerian tour of $h^{-1}(w)$. Let $i\in[n]$ and let $$w = \cdots ?_\ell (_i (_j \cdots )_i ?_k \cdots,$$ where $?$ represent either a closing or an opening parenthesis (if $(_i$ and $)_i$ are consecutive we let $j=i$, if $(_i$ is the first parenthesis of $w$ we let $\ell$ be the label of the last one and if $)_i$ is the last parenthesis, we let $k$ be the label of the first one). We have to show that if an Eulerian tour enters the vertex $i$ for the first time from $\ell$, this tour must exit the vertex $i$ towards $j$ and not $k$. Indeed, if this is true for all $i$, then the Eulerian tour is entirely determined by its first edge. Thus, this tour and $T$ are equal up to a cyclic shift. Suppose, for the sake of contradiction, that there exists an Eulerian tour $T'$ of $h^{-1}(w)$, which exits $i$ towards $k$, after entering $i$ for the first time, through $\ell$. 

Note that, by the properties of valid parentheses arrangements, the two parentheses corresponding to any vertex $v\ne i$ are either both between $(_i$ and $)_i$ (then we will say that $v$ is of type $A$) or both outside (type $B$). 
Clearly, all edges of the graph $h^{-1}(w)$ with initial vertex of type $A$ (respectively $B$) have a final vertex either $i$ or of type $A$ (respectively $B$), so the only way to go from a vertex of type $A$ to a vertex of type $B$ is through $i$ and vice-versa. Therefore, since $k$ is of type $B$, we must eventually enter $i$ in $T'$ through a vertex of type $B$, in order to access vertices of type $A$. The only way to do so, however, is through the edge $\ell \to i$, which was already used in $T'$. This is a contradiction, so the uniqueness of the Eulerian tour is proved. The two described maps $h$ and $h^{-1}$ are obviously inverses of each other, so the proof is complete. 

\end{proof}

\section{Acknowledgements}
We are thankful to Jan Kyncel for the suggestions in the mathoverflow post related to the considered problem \cite{mathover}.

\end{document}